\theoremstyle{definition}
\newtheorem{theorem}{Theorem} 
\newtheorem{definition}{Definition}[section]
\newtheorem{lemma}{Lemma} 
\newcommand{\bell}{\textup{B}}
\begin{document}

\title[Sum of inverses of odd divisors]
 {An identity for the sum of inverses of odd divisors of $n$ in terms of the number of representations of $n$ as a sum of $r$ squares}

\author[S. K. Jha]{Sumit Kumar Jha}

\address{%
International Institute of Information Technology\\
Hyderabad-500 032, India}

\email{kumarjha.sumit@research.iiit.ac.in}

\subjclass{11A05, 11P99}

\keywords{Sum of divisors; Sum of squares; Partial Bell polynomials; Fa\`{a} di Bruno's formula; Jacobi's theta function}



\begin{abstract}
Let $$\sum_{\substack{d|n\\ d\equiv 1 (2)}}\frac{1}{d}$$ denote the sum of inverses of odd divisors of a positive integer $n$, and let $c_{r}(n)$ be the number of representations of $n$ as a sum of $r$ squares where representations with different orders and different signs are counted as distinct. The aim is of this note is to prove the following interesting combinatorial identity:
$$
\sum_{\substack{d|n\\ d\equiv 1 (2)}}\frac{1}{d}=\frac{1}{2}\,\sum_{r=1}^{n}\frac{(-1)^{n+r}}{r}\,\binom{n}{r}\, c_{r}(n).
$$
\end{abstract}

\maketitle

\section{Main result}
In the following, let  $$\sum_{\substack{d|n\\ d\equiv 1 (2)}}\frac{1}{d}$$ denote the sum of inverses of odd divisors of a positive integer $n$. 
\begin{definition}\cite[formula 7.324]{Fine}
Let $\theta(q)$ be the following infinite product 
$$\theta(q):=\prod_{j=1}^{\infty}\frac{1-q^{j}}{1+q^{j}}=\sum_{n=-\infty}^{\infty}(-1)^{n}q^{n^{2}}$$ where $|q|<1$.
\end{definition}
\begin{definition}
For any positive integer $r$ define $c_{r}(n)$ by  
$$\theta(q)^{r}=\sum_{n=0}^{\infty}c_{r}(n)(-1)^{n}q^{n}$$
where $c_{r}(n)$ be the number of representations of $n$ as a sum of $r$ squares where representations with different orders and different signs are counted as distinct.
\end{definition}
Our aim is to derive the following identity.
\begin{theorem}
\label{main}
For all positive integers $n$ we have
\begin{equation}
\label{maineq}
\sum_{\substack{d|n\\ d\equiv 1 (2)}}\frac{1}{d}=\frac{1}{2}\sum_{r=1}^{n}\frac{(-1)^{n+r}}{r}\,\binom{n}{r}\, c_{r}(n).
\end{equation}
\end{theorem}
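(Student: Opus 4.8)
The plan is to treat everything as a formal power series in $q$ and to reduce both sides of \eqref{maineq} to the coefficient of $q^n$ in $\log\theta(q)$.

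First I would compute $\log\theta(q)$ from the product formula. Using $\log(1-q^j)=-\sum_{k\ge1}q^{jk}/k$ and $\log(1+q^j)=-\sum_{k\ge1}(-1)^k q^{jk}/k$, the contributions of even $k$ cancel and those of odd $k$ double, so
\[
\log\theta(q)=-2\sum_{j\ge1}\ \sum_{\substack{k\ge1\\ k\equiv1\,(2)}}\frac{q^{jk}}{k}
=-2\sum_{m\ge1}\biggl(\,\sum_{\substack{d\mid m\\ d\equiv1\,(2)}}\frac1d\biggr)q^{m},
\]
and in particular $[q^n]\log\theta(q)=-2\sum_{d\mid n,\,d\equiv1(2)}1/d$. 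Hence it suffices to prove that the right-hand side of \eqref{maineq} equals $-\tfrac12\,[q^n]\log\theta(q)$.

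Next I would turn the right-hand side of \eqref{maineq} into a coefficient extraction. From the defining expansion $\theta(q)^r=\sum_{m\ge0}c_r(m)(-1)^m q^m$ we get $c_r(n)=(-1)^n[q^n]\theta(q)^r$, and since $(-1)^{n+r}(-1)^n=(-1)^r$ the right-hand side becomes $\tfrac12\,[q^n]\sum_{r=1}^{n}\frac{(-1)^r}{r}\binom nr\theta(q)^r$. The heart of the argument is to collapse this inner sum: writing $u:=\theta(q)-1=O(q)$ and expanding $\theta(q)^r=\sum_{k=0}^{r}\binom rk u^k$, I would interchange the two finite sums, use $\binom nr\binom rk=\binom nk\binom{n-k}{r-k}$, and evaluate the resulting sum over $r$ by the Beta-integral identity $\sum_{s=0}^{n-k}\binom{n-k}{s}\frac{(-1)^s}{s+k}=\frac1{k\binom nk}$ for $1\le k\le n$ (the term $k=0$ contributing the harmonic number). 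This yields
\[
\sum_{r=1}^{n}\frac{(-1)^r}{r}\binom nr\theta(q)^r=-H_n+\sum_{k=1}^{n}\frac{(-1)^k}{k}\bigl(\theta(q)-1\bigr)^k .
\]
Finally, since $(\theta(q)-1)^k=O(q^k)$ the terms with $k>n$ are invisible to $[q^n]$, so $[q^n]\sum_{k=1}^{n}\frac{(-1)^k}{k}(\theta(q)-1)^k=[q^n]\sum_{k\ge1}\frac{(-1)^k}{k}(\theta(q)-1)^k=-[q^n]\log\theta(q)$, while $[q^n](-H_n)=0$ for $n\ge1$; combined with the first step this gives exactly $\sum_{d\mid n,\,d\equiv1(2)}1/d$.

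The step I expect to be the main obstacle is the collapse of the weighted sum over $r$: one must recognize that the coefficients $\frac{(-1)^r}{r}\binom nr$ are precisely what is needed for the binomial expansion of $\theta(q)^r$ in powers of $\theta(q)-1$ to telescope into the truncated Mercator series of $-\log\bigl(1+(\theta(q)-1)\bigr)$. An equivalent and perhaps more transparent route — and the one matching the paper's keywords — is to write $\theta(q)^r=e^{r\log\theta(q)}$, expand its $n$-th coefficient by Fa\`a di Bruno's formula so that $c_r(n)$ appears as $\tfrac{(-1)^n}{n!}\sum_{k\ge1}r^k B_{n,k}$ with $B_{n,k}$ the partial Bell polynomials in the derivatives of $\log\theta$ at $0$, and then kill all terms but $k=1$ using the finite-difference identity $\sum_{r=1}^{n}(-1)^r\binom nr r^{k-1}=-\,[k=1]$; since $B_{n,1}=(\log\theta)^{(n)}(0)=-2\,n!\sum_{d\mid n,\,d\equiv1(2)}1/d$, the claimed identity drops out. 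Either way the arithmetic content is carried entirely by the computation of $\log\theta(q)$ in the first step.
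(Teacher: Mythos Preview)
Your argument is correct. Both your primary route and your alternative Fa\`a di Bruno route go through, and the arithmetic input---the computation of $\log\theta(q)$---is the same as the paper's.

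The organization, however, is genuinely different. The paper applies Fa\`a di Bruno to the composite $\log\circ\,\theta$, obtaining partial Bell polynomials $B_{n,k}\bigl(\theta'(0),\theta''(0),\dots\bigr)$ in the derivatives of $\theta$ (not of $\log\theta$); it then identifies these Bell polynomials with $\frac{1}{k!}(\theta-1)^k$ via their exponential generating function, expands binomially to bring in $c_r(n)$, and finishes by swapping the $k$- and $r$-sums using the telescoping identity $\sum_{k=r}^{n}\frac{1}{k}\binom{k}{r}=\frac{1}{r}\binom{n}{r}$. Your main route inverts this flow: you start from the $r$-sum, expand $\theta^r$ in powers of $\theta-1$, and collapse over $r$ with the Beta-integral identity $\sum_{s=0}^{n-k}\binom{n-k}{s}\frac{(-1)^s}{s+k}=\frac{1}{k\binom nk}$, landing directly on the truncated Mercator series for $-\log\theta$ without ever naming a Bell polynomial. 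Your alternative route is a third variant: Fa\`a di Bruno applied to $e^{r\log\theta}$ gives Bell polynomials in the derivatives of $\log\theta$, and the finite-difference identity $\sum_{r=1}^{n}(-1)^r\binom nr r^{k-1}=-[k=1]$ (valid for $1\le k\le n$) isolates $B_{n,1}=(\log\theta)^{(n)}(0)$. Your primary approach is the most elementary of the three; the paper's framework has the advantage of producing the two intermediate lemmas (the Bell-polynomial expression for the odd-divisor sum and the closed form for $B_{n,k}$ in terms of $c_r(n)$) as results of independent interest.
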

We require following two lemmas for our proof.
\begin{lemma}
For all positive integers $n$ we have
\begin{equation}
\label{lem1}
2\, \sum_{\substack{d|n\\ d\equiv 1 (2)}}\frac{1}{d}=\frac{1}{n!}\sum_{k=1}^n (-1)^{k}\, (k-1)!\, B_{n,k}\left(\theta'(0),\theta''(0),\dots,\theta^{(n-k+1)}(0)\right)
\end{equation}
where $B_{n,k}\equiv\bell_{n,k}(x_1,x_2,\dotsc,x_{n-k+1})$ are the partial Bell polynomials defined by \cite[p. 134]{Comtet}
\begin{equation*}
\bell_{n,k}(x_1,x_2,\dotsc,x_{n-k+1})=\sum_{\substack{1\le i\le n,\ell_i\in\mathbb{N}\\ \sum_{i=1}^ni\ell_i=n\\ \sum_{i=1}^n\ell_i=k}}\frac{n!}{\prod_{i=1}^{n-k+1}\ell_i!} \prod_{i=1}^{n-k+1}\Bigl(\frac{x_i}{i!}\Bigr)^{\ell_i}.
\end{equation*}
\end{lemma}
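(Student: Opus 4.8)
The plan is to turn the product definition of $\theta$ into an explicit power series for $\log\theta(q)$, and then read off its $n$-th Taylor coefficient in two ways: combinatorially, and via Fa\`{a} di Bruno's formula.

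\textbf{Step 1: a closed form for the series of $\log\theta(q)$.} Working in the disc $|q|<1$, where the product converges locally uniformly and is nonvanishing near $q=0$, I would take logarithms term by term in $\theta(q)=\prod_{j\ge 1}\frac{1-q^{j}}{1+q^{j}}$ and expand each factor using $\log(1-x)=-\sum_{m\ge 1}x^{m}/m$ and $\log(1+x)=\sum_{m\ge 1}(-1)^{m+1}x^{m}/m$. This gives
$$\log\theta(q)=\sum_{j\ge 1}\bigl[\log(1-q^{j})-\log(1+q^{j})\bigr]=-\sum_{j\ge 1}\sum_{m\ge 1}\frac{1+(-1)^{m+1}}{m}\,q^{jm}.$$
The elementary parity fact that $1+(-1)^{m+1}$ equals $2$ when $m$ is odd and $0$ when $m$ is even kills the even $m$, and collecting the coefficient of $q^{n}$ by setting $n=jm$ yields
$$\log\theta(q)=-2\sum_{n\ge 1}\Biggl(\ \sum_{\substack{d\mid n\\ d\equiv 1(2)}}\frac{1}{d}\Biggr)q^{n},\qquad\text{so}\qquad \left.\frac{d^{n}}{dq^{n}}\log\theta(q)\right|_{q=0}=-2\,n!\sum_{\substack{d\mid n\\ d\equiv 1(2)}}\frac{1}{d}.$$

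\textbf{Step 2: Fa\`{a} di Bruno and conclusion.} Now I would apply Fa\`{a} di Bruno's formula to the composition $q\mapsto f(\theta(q))$ with $f=\log$. Since the constant term of the product (equivalently of $\sum_{m}(-1)^{m}q^{m^{2}}$) is $\theta(0)=1$, and since $f^{(k)}(x)=(-1)^{k-1}(k-1)!\,x^{-k}$ gives $f^{(k)}(1)=(-1)^{k-1}(k-1)!$, the formula yields
$$\left.\frac{d^{n}}{dq^{n}}\log\theta(q)\right|_{q=0}=\sum_{k=1}^{n}f^{(k)}(1)\,\bell_{n,k}\bigl(\theta'(0),\dots,\theta^{(n-k+1)}(0)\bigr)=\sum_{k=1}^{n}(-1)^{k-1}(k-1)!\,\bell_{n,k}\bigl(\theta'(0),\dots,\theta^{(n-k+1)}(0)\bigr).$$
Equating this with the expression from Step 1 and dividing by $-n!$ gives
$$2\sum_{\substack{d\mid n\\ d\equiv 1(2)}}\frac{1}{d}=\frac{1}{n!}\sum_{k=1}^{n}(-1)^{k}(k-1)!\,\bell_{n,k}\bigl(\theta'(0),\dots,\theta^{(n-k+1)}(0)\bigr),$$
which is exactly \eqref{lem1}.

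I do not expect a genuine obstacle here: the only points needing care are the justification of term-by-term logarithm and differentiation inside $|q|<1$ (routine, from local uniform convergence of the product and $\theta(0)=1$), the parity computation isolating the odd divisors in Step 1, and the sign bookkeeping ($(-1)^{k-1}=-(-1)^{k}$) in the final line. If one prefers, Step 1 can be replaced by differentiating the logarithmic-derivative identity $\theta'(q)/\theta(q)=-2\sum_{j}\sum_{m\text{ odd}}q^{jm-1}$, but the logarithm route keeps the connection to Fa\`{a} di Bruno most transparent.
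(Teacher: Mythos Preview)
Your proof is correct and follows essentially the same route as the paper: expand $\log\theta(q)$ term by term from the infinite product to obtain $-2\sum_{n\ge 1}\bigl(\sum_{d\mid n,\,d\text{ odd}}1/d\bigr)q^{n}$, then apply Fa\`{a} di Bruno to $\log\circ\,\theta$ at $q=0$ using $\theta(0)=1$ and $f^{(k)}(1)=(-1)^{k-1}(k-1)!$ to identify the $n$-th Taylor coefficient with the Bell-polynomial sum. The paper's argument is identical up to notation (it writes the parity factor as $(1-(-1)^{d})/d$ rather than your $(1+(-1)^{m+1})/m$) and is slightly less explicit about the final sign/division step, which you spell out.
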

\begin{proof}
It is easy to see that
\begin{align*}
\log(\theta(q))&=\sum_{j=1}^{\infty}\log(1-q^{j})-\sum_{j=1}^{\infty}\log(1+q^{j})\\
&=-\sum_{j=1}^{\infty}\sum_{l=1}^{\infty}\frac{q^{lj}}{l}+\sum_{j'=1}^{\infty}\sum_{l'=1}^{\infty}\frac{q^{l'j'}(-1)^{l'}}{l'}\\
&=-\sum_{n=1}^{\infty}q^{n}\left(\sum_{d|n}\frac{1-(-1)^{d}}{d}\right).
\end{align*}
Let $f(q)=\log{q}$. Using Fa\`{a} di Bruno's formula \cite[p. 137]{Comtet} we have
\begin{equation}
\label{faa}
{d^n \over dq^n} f(\theta(q)) = \sum_{k=1}^n f^{(k)}(\theta(q))\cdot B_{n,k}\left(\theta'(q),\theta''(q),\dots,\theta^{(n-k+1)}(q)\right).
\end{equation}
Since $f^{(k)}(q)=\frac{(-1)^{k-1}\,(k-1)!}{q^{k}}$ and $\theta(0)=1$, letting $q\rightarrow 0$ in the above equation gives us Equation \eqref{lem1}.
\end{proof}
\begin{lemma}
We have, for positive integers $n,k$,
\begin{equation}
\label{lem2}
B_{n,k}\left(\theta'(0),\theta''(0),\dots,\theta^{(n-k+1)}(0)\right)=(-1)^{n}\,\frac{n!}{k!}\sum_{r=1}^{k}(-1)^{k-r}\binom{k}{r}c_{r}(n)
\end{equation}
\end{lemma}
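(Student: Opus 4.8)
The plan is to read off \eqref{lem2} directly from the exponential generating function of the partial Bell polynomials. Recall \cite[p.~133]{Comtet} that for any sequence $(x_m)_{m\ge 1}$ one has, as an identity of formal power series in $t$,
\[
\sum_{n\ge k} B_{n,k}(x_1,x_2,\dots)\,\frac{t^n}{n!}=\frac{1}{k!}\left(\sum_{m\ge 1}x_m\,\frac{t^m}{m!}\right)^{k}.
\]
First I would specialize $x_m=\theta^{(m)}(0)$. Since $\theta(0)=1$, the inner series is just the Taylor expansion of $\theta$ with its constant term removed, i.e.\ $\sum_{m\ge 1}\theta^{(m)}(0)\,t^m/m!=\theta(t)-1$. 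Hence
\[
\sum_{n\ge k} B_{n,k}\bigl(\theta'(0),\dots,\theta^{(n-k+1)}(0)\bigr)\,\frac{t^n}{n!}=\frac{1}{k!}\bigl(\theta(t)-1\bigr)^{k}.
\]

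Next I would expand the right-hand side by the binomial theorem, $\bigl(\theta(t)-1\bigr)^{k}=\sum_{r=0}^{k}\binom{k}{r}(-1)^{k-r}\theta(t)^{r}$, and substitute the defining expansion $\theta(t)^{r}=\sum_{n\ge 0}c_r(n)(-1)^{n}t^{n}$ (valid for $r\ge 1$, and also for $r=0$ under the convention $c_0(n)=\delta_{n,0}$). Comparing the coefficient of $t^{n}$ on the two sides then gives, for every $n\ge 1$,
\[
\frac{B_{n,k}\bigl(\theta'(0),\dots\bigr)}{n!}=\frac{(-1)^{n}}{k!}\sum_{r=0}^{k}\binom{k}{r}(-1)^{k-r}c_r(n).
\]
Since $c_0(n)=0$ for $n\ge 1$, the $r=0$ term drops out, and multiplying through by $n!$ yields exactly \eqref{lem2}. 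The case $n<k$ is automatic: the left side vanishes because $B_{n,k}=0$ there, and the right side vanishes because $\theta(t)-1=O(t)$ forces $\bigl(\theta(t)-1\bigr)^{k}=O(t^{k})$, so the coefficient of $t^{n}$ on the right is $0$ as well.

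The argument is essentially mechanical once the Bell-polynomial generating function is in hand, so I do not expect a genuine obstacle; the only points needing care are bookkeeping ones — separating off the $r=0$ term by hand, since the definition of $c_r(n)$ is stated only for $r\ge 1$, and carrying the sign $(-1)^{n}$ from $\theta(t)^{r}=\sum c_r(n)(-1)^{n}t^{n}$ correctly through the coefficient comparison. Working throughout with formal power series in $t$ also sidesteps any convergence issues.
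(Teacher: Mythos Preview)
Your proof is correct and follows essentially the same route as the paper: specialize the exponential generating function of the partial Bell polynomials to $x_m=\theta^{(m)}(0)$, identify the inner series as $\theta(t)-1$, expand $(\theta(t)-1)^k$ binomially, insert $\theta(t)^r=\sum_{n\ge 0}(-1)^n c_r(n)t^n$, and compare coefficients of $t^n$. Your additional remarks on the $r=0$ term and the $n<k$ case are sound and simply make explicit what the paper leaves implicit.
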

\begin{proof}
We start with the generating function for the partial Bell polynomials \cite[Equation (3a') on p. 133]{Comtet}, and proceed as follows
\begin{align*}
{\displaystyle \sum _{n=k}^{\infty }B_{n,k}\left(\theta'(0),\theta''(0),\dots,\theta^{(n-k+1)}(0)\right){\frac {q^{n}}{n!}}}
&= {\frac {1}{k!}}\left(\sum _{j=1}^{\infty }\theta^{(j)}(0){\frac {q^{j}}{j!}}\right)^{k} \\
&=\frac{1}{k!}(\theta(q)-1)^{k}\\
&=\frac{1}{k!}\sum_{r=0}^{k}(-1)^{k-r}\binom{k}{r}\theta(q)^{r}\\
&=\frac{1}{k!}\sum_{r=0}^{k}(-1)^{k-r}\binom{k}{r}\sum_{n=0}^{\infty}(-1)^{n}c_{r}(n)q^{n}
\end{align*}
to conclude Equation \eqref{lem2}.
\end{proof}
\begin{proof}[Proof of Theorem \ref{main}]
We combine Equation \eqref{lem1} and Equation \eqref{lem2} to obtain
\begin{align*}
2\, \sum_{\substack{d|n\\ d\equiv 1 (2)}}\frac{1}{d}&=(-1)^{n}\sum_{k=1}^{n}\frac{1}{k}\sum_{r=1}^{k}(-1)^{r}\, \binom{k}{r}\,c_{r}(n)\\
&=(-1)^{n}\sum_{r=1}^{n}(-1)^{r}c_{r}(n)\sum_{k=r}^{n}\frac{1}{k}\binom{k}{r}\\
&=(-1)^{n}\sum_{r=1}^{n}\frac{(-1)^{r}}{r}\,\binom{n}{r}\,c_{r}(n),
\end{align*}
where we used the identity $\sum_{k=r}^{n}\frac{1}{k}\binom{k}{r}=\frac{1}{r}\binom{n}{r}$ which can be proved using the Pascal's formula
$$
\binom{k}{r-1}=\binom{k+1}{r}-\binom{k}{r}.
$$
This concludes proof of our main result Equation \eqref{maineq}.
\end{proof}

\end{document}